\definecolor{webgreen}{rgb}{0,.5,0}
\definecolor{webbrown}{rgb}{.6,0,0}
\newcommand{\ZZ}{\mathbb{Z}}
\newcommand{\Scb}{\ensuremath{S_{[c,b]}}}
\newcommand{\seqnum}[1]{\href{https://oeis.org/#1}{\underline{#1}}}
\begin{document}

\theoremstyle{plain}
\newtheorem{theorem}{Theorem}
\newtheorem{corollary}[theorem]{Corollary}
\newtheorem{lemma}[theorem]{Lemma}
\newtheorem{proposition}[theorem]{Proposition}

\theoremstyle{definition}
\newtheorem{definition}[theorem]{Definition}

\begin{center}
\vskip 1cm
{\LARGE\bf Fixed Points of Augmented Generalized \\
\vskip .1in
Happy Functions II: Oases and Mirages 
}
\vskip 1cm

\begin{minipage}[t]{0.5\textwidth}
\begin{center}
Breeanne Baker Swart \\
Department of Mathematical Sciences \\
The Citadel\\
171 Moultrie St. \\
Charleston, SC 29409 \\
USA \\
\href{mailto:breeanne.swart@citadel.edu}{\tt breeanne.swart@citadel.edu} \\
\ \\
Susan Crook\\
Division of Mathematics, Engineering and Computer Science\\
Loras College\\ 
1450 Alta Vista St.\\
Dubuque, IA 52001\\
USA\\
\href{mailto:susan.crook@loras.edu}{\tt susan.crook@loras.edu} \\
\ \\
Helen G. Grundman\\
Department of Mathematics\\
Bryn Mawr College\\
101 N. Merion Ave.\\
Bryn Mawr, PA 19010\\
USA\\
\href{mailto:grundman@brynmawr.edu}{\tt grundman@brynmawr.edu} \\
\end{center}
\end{minipage}
\begin{minipage}[t]{0.45\textwidth}
\begin{center}
Laura Hall-Seelig\\
Department of Mathematics\\
Merrimack College\\
315 Turnpike Street\\
North Andover, MA 01845\\
USA\\
\href{mailto:hallseeligl@merrimack.edu}{\tt hallseeligl@merrimack.edu} \\

\ \\
May Mei\\ 
Department of Mathematics and Computer Science\\
Denison University\\
100 West College Street\\
Granville, Ohio 43023\\
USA\\
\href{mailto:meim@denison.edu}{\tt meim@denison.edu} \\
\ \\
Laurie Zack \\
Department of Mathematical Sciences\\ 
High Point University\\
One University Parkway\\
High Point, NC 27268\\
USA\\
\href{mailto:lzack@highpoint.edu}{\tt lzack@highpoint.edu} \\
\end{center}
\end{minipage}
\end{center}
    
\vskip .2in

\begin{abstract}
An augmented generalized happy function $\Scb$ maps a positive integer to the sum of the squares of its base $b$ digits plus $c$. For $b\geq 2$ and $k \in \ZZ^+$, a \emph{$k$-desert base $b$} is a set of $k$ consecutive non-negative integers $c$ for each of which $\Scb$ has no fixed points. 
In this paper, we examine a complementary notion, a \emph{$k$-oasis base $b$}, which we define to be a set of $k$ consecutive non-negative integers $c$ for each of which $\Scb$ has a fixed point. In particular, after proving some basic properties of oases base $b$, we compute bounds on the lengths of oases base $b$ and compute the minimal examples of maximal length oases base $b$ for small values of $b$.  
\end{abstract}

\section{Introduction}\label{S:Introduction}

The concepts of happy number~\seqnum{A007770} and generalized happy number~\cite{genhappy,guy,honsberger} were generalized further~\cite{augment} by allowing for augmentation, as follows. 
\begin{definition}
For integers $c \geq 0$ and $b \geq 2$, the {\it augmented generalized happy function}, $\Scb:{\ZZ}^+ \rightarrow {\ZZ}^+$, is defined for $0\leq a_i \leq b-1$ and $a_n \neq 0$ by
\begin{equation*}\label{eq:definition}
\Scb\left(\sum_{i=0}^n b^i a_i  \right) =
c + \sum_{i=0}^n a_i^2.
\end{equation*}
The value $c$ is called the {\it augmenting constant} of $\Scb$.  A positive integer $a$ is called a {\it fixed point} of $\Scb$ if $\Scb(a) = a$.
A positive integer $a$ is a {\it happy number} if, for some $k\in \ZZ^+$, $S_{[0,10]}^k(a) = 1$.  
\end{definition}

The function $S_{[0,10]}$ is easily seen to have exactly one fixed point, while, depending on the values of $c$ and $b$, the function $\Scb$ may have zero, one, or multiple fixed points~\cite{desert}.  The case of zero fixed points is studied in Part I of this paper~\cite{desert}, in which Baker Swart et al.\ prove that for each $b \geq 2$, there exist arbitrarily long finite sequences of consecutive values of $c$ for which $\Scb$ has no fixed point.

In this work, we study the complementary case by considering sets of consecutive augmenting constants $c$ for which $\Scb$ has at least one fixed point and proving that, for each fixed $b$, the size of these sets is bounded.
In Section~\ref{S:OasisB}, we define the concept of $k$-oasis base $b$, determine some initial properties, and prove a bound, for each $b\geq 2$, on the lengths of oases base $b$.
In Section~\ref{S:MirageB}, we define the concept of a $k$-mirage base $b$, prove that the maximal length of mirages base $b$ bounds the maximal length of oases base $b$, and provide an algorithm for finding the maximal length of mirages base $b$.  Finally, we use the above to determine the maximal length of oases (and of mirages) base $b$, for all $b \leq 20$.

For later convenience, we note that if
$\sum_{i=0}^{n}b^ia_i$ is a fixed point of $\Scb$, then solving for $c$ yields that
\begin{equation}
 c = \sum_{i=0}^n(b^i-a_i)a_i. \label{eqn:cfixedpoint} 
\end{equation}
Thus, for a given base $b$ and an arbitrary positive integer $a$, there is at most one augmenting constant, $c$, such that $a$ is a fixed point of $\Scb$.

\section{Fixed point oases}\label{S:OasisB}

We begin by defining the key concept in this paper, a $k$-oasis base $b$, which is analogous to the concept of a $k$-desert base $b$, defined in Part I of this paper~\cite{desert}.

\begin{definition} For $b\geq 2$ and $k\in \ZZ^+$, a {\em $k$-oasis base $b$} is a set of $k$ consecutive non-negative integers $c$ for each of which $\Scb$ has at least one fixed point.  An {\em oasis base $b$} is a $k$-oasis base $b$, for some $k \geq 1$.  The \emph{length} of a $k$-oasis base $b$ is $k$. \end{definition}

Theorem~\ref{oases} provides some basic facts about the existence and lengths of oases base $b$ for different values of $b \geq 2$.

\begin{theorem}\label{oases}  
Let $b\geq 2$.
\begin{enumerate}
\item
There exists an oasis base $b$.
\item
If $b\geq 2$ is odd, then every $k$-oasis base $b$ has $k = 1$.
\item
If $b \geq 6$ is even, then there exists a $5$-oasis base $b$.
\end{enumerate}
\end{theorem}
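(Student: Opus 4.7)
My plan is to handle the three parts in order, with increasing complexity.

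For part~(1), I would simply observe that the integer $a = 1$ satisfies $S_{[0,b]}(1) = 0 + 1^2 = 1$, so it is a fixed point of $S_{[0,b]}$; thus $\{0\}$ is a $1$-oasis base~$b$ for every $b \geq 2$.

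For part~(2), I would prove the stronger parity statement that for odd $b$, only even $c$ admit a fixed point of $\Scb$. Since $b$ is odd, $b^i \equiv 1 \pmod 2$ for all $i \geq 0$, so any positive integer $a = \sum_i a_i b^i$ satisfies $a \equiv \sum_i a_i \pmod 2$. Moreover $a_i^2 \equiv a_i \pmod 2$ for every digit $a_i$, so $\sum_i a_i^2 \equiv \sum_i a_i \pmod 2$. Substituting these two congruences into equation~\eqref{eqn:cfixedpoint} yields $c \equiv 0 \pmod 2$. Since two consecutive non-negative integers cannot both be even, no $k$-oasis with $k \geq 2$ can exist.

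For part~(3), I would construct, for each even $b \geq 6$, five consecutive augmenting constants each admitting an explicit three-digit fixed point of the form $a = b^2 + a_1 b + a_0$. By equation~\eqref{eqn:cfixedpoint}, such an $a$ is a fixed point of $\Scb$ precisely when $c = (b^2 - 1) + a_1(b - a_1) + a_0(1 - a_0)$. Writing $a_1 = b/2 - j$, the contribution $a_1(b - a_1)$ simplifies to $b^2/4 - j^2$, so the ``offset'' $c - (b^2 - 1)$ equals $b^2/4 - \bigl(j^2 + a_0(a_0 - 1)\bigr)$. The crucial observation is that the five consecutive integers $9,10,11,12,13$ are each representable as $j^2 + a_0(a_0-1)$ via the pairs $(j,a_0) = (3,0),(2,3),(3,2),(0,4),(1,4)$ respectively. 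These choices yield five consecutive augmenting constants $b^2 + b^2/4 - k$ for $k = 14, 13, 12, 11, 10$, each witnessed by the explicit fixed point with digits $(1,\, b/2 - j,\, a_0)$ in base~$b$. The hypothesis $b \geq 6$ is used precisely to guarantee that $b/2 - 3 \geq 0$ when $j = 3$ (and one readily checks that the resulting values of $c$ are all non-negative).

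The main obstacle is the combinatorial check in part~(3): the sequences $\{j^2\}$ and $\{a_0(a_0 - 1)\}$ are both sparse, so it is not obvious a priori that their sumset contains a run of five consecutive values. A small table of sums reveals that $\{9, 10, 11, 12, 13\}$ is the first such run, after which the verification reduces to direct substitution into equation~\eqref{eqn:cfixedpoint}.
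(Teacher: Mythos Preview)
Your proofs of parts~(1) and~(2) match the paper's (the paper cites \cite[Lemma~2.3]{desert} for the parity fact, which you reprove directly; your argument is fine). For part~(3) your construction is correct but differs from the paper's. The paper works with \emph{two}-digit fixed points $a = a_1 b + a_0$: writing $B = b/2$ and $a_1 = B - j$, equation~\eqref{eqn:cfixedpoint} gives $c = B^2 - \bigl(j^2 + a_0(a_0-1)\bigr)$, and the run $\{0,1,2,3,4\}$ in the same sumset you consider is realized by $(j,a_0) \in \{(0,1),(1,1),(0,2),(1,2),(2,1)\}$, yielding the $5$-oasis $\{B^2-4,\ldots,B^2\}$. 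Your three-digit construction with leading digit~$1$ shifts everything by $b^2 - 1$ and instead uses the later run $\{9,10,11,12,13\}$, which forces $j$ up to~$3$ and $a_0$ up to~$4$; this is exactly why you need $b/2 - 3 \geq 0$. The paper's choice is tidier---smaller parameters, two-digit witnesses, and it recovers the minimal $5$-oasis $\{5,6,7,8,9\}$ for $b=6$ listed in Table~\ref{max}---while your approach makes the underlying sumset structure more explicit and would generalize naturally if one wanted longer runs.
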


\begin{proof}
First, for any base $b\geq 2$, since $S_{[0,b]}(1) = 1$, $\{0\}$ is a 1-oasis base $b$.

Next, let $b\geq 2$ be odd.
As shown by Baker Swart et al.~\cite[Lemma 2.3]{desert}, if $\Scb$ has a fixed point, then $c$ is even.  Part 2 of the theorem follows immediately.

Finally, let $b \geq 6$ be even and let $B = b/2$.  Set\\
\begin{tabular}{lll}
\hphantom{mmm}& $a_1={\left(B-2\right)b+1}$, 
\hphantom{mm} &$c_1=B^2-4$,\\
&$a_2={\left(B-1\right)b+2}$, & $c_2=B^2-3$, \\ 
&$a_3={Bb+2}$, &  $c_3=B^2-2$, \\
&$a_4={\left(B-1\right)b+1}$, & $c_4=B^2-1$,\\
&$a_5={Bb+1}$, and  &  $c_5=B^2$.
\end{tabular}\\
A direct calculation shows that for each $1 \leq i\leq 5$, 
$S_{[c_i,b]}(a_i) = a_i$.  Hence $\{c_1,c_2,c_3,c_4,c_5\}$ is a $5$-oasis base $b$.
\end{proof}

Given a $k$-oasis base $b$, it is easy to produce additional $k$-oases base $b$.

\begin{theorem}\label{t:infinite} Let $b \geq 2$ and $k \geq 1$.  If there exists a $k$-oasis base $b$, there exist infinitely many $k$-oases base $b$.
\end{theorem}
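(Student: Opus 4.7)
The plan is to produce, from any single $k$-oasis, an infinite family of translated copies by appending a high-order digit to each witness fixed point. Let $\{c, c+1, \ldots, c+k-1\}$ be a given $k$-oasis base $b$, and for each $0\leq j\leq k-1$ choose a positive integer $a_j$ that is a fixed point of $S_{[c+j,b]}$. Let $N$ be strictly larger than the number of base-$b$ digits of every $a_j$.

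For each integer $m\geq N$, I would define $a_j' = a_j + b^m$. Because $m$ exceeds the length of the base-$b$ expansion of $a_j$, the digits of $a_j'$ are exactly the digits of $a_j$ together with a single extra $1$ in position $m$ (with zeros in between), so $a_j'$ has a valid base-$b$ representation with leading digit $1$. Applying equation~(\ref{eqn:cfixedpoint}) to this expansion shows that $a_j'$ is the fixed point of $S_{[c',b]}$ exactly when
\begin{equation*}
c' = \sum_{i=0}^{n_j}(b^i - a_{j,i})a_{j,i} + (b^m - 1)\cdot 1 = (c+j) + b^m - 1,
\end{equation*}
where $\sum_{i=0}^{n_j}b^ia_{j,i}$ is the base-$b$ expansion of $a_j$. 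Hence $a_j'$ is a fixed point of $S_{[c+j+b^m-1,\,b]}$, for every $0\leq j\leq k-1$, which shows that
\begin{equation*}
\{c+b^m-1,\,c+b^m,\,\ldots,\,c+b^m+k-2\}
\end{equation*}
is a $k$-oasis base $b$. Since these $k$-oases have strictly increasing minima as $m$ grows, letting $m$ range over integers at least $N$ produces infinitely many distinct $k$-oases.

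The main thing to be careful about is verifying the fixed-point condition for $a_j'$; but this is essentially immediate from equation~(\ref{eqn:cfixedpoint}) once one checks that the digit expansion of $a_j'$ really does consist of the digits of $a_j$ plus an isolated leading $1$, which is why we require $m\geq N$. No additional obstacle arises, because the same shift $b^m-1$ is applied uniformly to every $c+j$ in the original oasis, preserving consecutiveness.
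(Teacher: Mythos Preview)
Your proof is correct and follows essentially the same approach as the paper: prepend high-order digits to each witness fixed point so that every augmenting constant in the oasis shifts by the same amount. The only cosmetic difference is that the paper fixes a single position $n$ and prepends an arbitrary $t$ (obtaining the shift $tb^n - S_{[0,b]}(t)$), whereas you prepend a single digit $1$ at a variable position $m$ (obtaining the shift $b^m - 1$); both parameterizations clearly yield infinitely many distinct $k$-oases.
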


\begin{proof}
Fix $b\geq 2$ and let $\{c + j|1\leq j\leq k\}$ be a $k$-oasis base $b$.
For each $j$, $1\leq j \leq k$, let $a(j)$ denote a fixed point of $S_{[c+j,b]}$.
Fix $n \in \ZZ^+$ such that for each $j$, $a(j) < b^n$.  Then for each positive integer $t$ and for each $j$, $t b^n + a(j)$ is a fixed point of 
\[S_{[c + j + t b^n - S_{[0,b]}(t), b]}.\] 
Hence for each $t\in \ZZ^+$, 
$\{c + j + t b^n - S_{[0,b]}(t)|1\leq j\leq k\}$ is a $k$-oasis base $b$.
\end{proof}

The following theorems provide properties of fixed points associated with values of $c$ that are in the same oasis base $b$.  
 Theorem~\ref{t:samedigit} provides that unless two such fixed points are quite small, they must have the same number of digits, and Theorem~\ref{T:matchingdigits} says that few of the digits of the fixed points may differ.
These theorems are used in proving Theorem~\ref{t:maxoasis}, which gives a general upper bound for the length of an oasis base $b$, and again in Section~\ref{S:MirageB}, in proving the correctness of a method we provide for improving this bound.

\begin{theorem}\label{t:samedigit}
Let $c \geq 0$ and $b \geq 2$.  Let $a \in \ZZ^+$ have $n + 1 > 3$ digits and satisfy $\Scb({a}) = {a}$.  Then every fixed point with an augmenting constant in the same oasis base $b$ as $c$ has exactly $n+1$ digits.
\end{theorem}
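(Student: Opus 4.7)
The plan is to locate, for each digit-length $k$, the range of augmenting constants that can arise from a $k$-digit fixed point, and then rule out any oasis that contains two such values for different values of $k$ once $k \geq 4$. For each $k \geq 1$, let $D(k)$ denote the set of $c \geq 0$ for which $\Scb$ has a fixed point with exactly $k$ base-$b$ digits. By equation~\eqref{eqn:cfixedpoint}, every $c \in D(k)$ has the form $\sum_{i=0}^{k-1}(b^i - a_i) a_i$ for digits $0 \leq a_i \leq b-1$ with $a_{k-1} \neq 0$, and since the summands involve disjoint coordinates, each can be extremized independently by an elementary one-variable quadratic optimization. Carrying this out yields, for $k \geq 3$, $D(k) \subseteq [L(k), U(k)]$ where $L(k) = b^{k-1} - (b-2)(b-1) - 1$ and $U(k) = \lfloor b^2/4 \rfloor + (b-1)\sum_{i=2}^{k-1}(b^i - b + 1)$.

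The central computation is the gap $L(k+1) - U(k) = 3(b-1) + (b-1)^2(k-2) - \lfloor b^2/4\rfloor$, which I will verify is at least $2$ for all $b \geq 2$ and $k \geq 3$; the worst case $k = 3$ reduces to checking $\tfrac{3}{4} b^2 + b - 2 \geq 2$, which is immediate for $b \geq 2$. A straightforward term-by-term comparison also shows that both $L$ and $U$ are nondecreasing in $k$. These combine to give, for $n \geq 3$, that the interval $[L(n+1), U(n+1)]$ is strictly separated from every other $[L(k), U(k)]$, with at least one integer lying in each gap to the neighboring intervals.

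The theorem then follows by contradiction. Suppose $c \in D(n+1)$ with $n \geq 3$ and some $c'$ in the same oasis as $c$ lies in $D(m+1)$ with $m \neq n$. If $c' = c$ the disjointness of the two intervals gives an immediate contradiction. If $c' > c$ then $m < n$ is impossible, since it would force $c' \leq U(m+1) \leq U(n) < L(n+1) \leq c$; thus $m > n$, and the integer $c^{*} = U(n+1) + 1$ satisfies $c < c^{*} < L(n+2) \leq c'$. By the monotonicities and the gap bound, $c^{*}$ lies in no $[L(k), U(k)]$, and in particular in no $D(k)$, so $S_{[c^{*}, b]}$ has no fixed point, breaking the oasis. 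The case $c' < c$ is symmetric with $c^{*} = U(n) + 1$. The main obstacle is purely computational: deriving the closed forms for $L(k)$ and $U(k)$ and verifying the gap inequality $L(k+1) - U(k) \geq 2$; the rest of the argument is routine bookkeeping around these bounds.
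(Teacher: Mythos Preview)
Your argument is essentially the paper's: both confine the augmenting constant of any $k$-digit fixed point to an interval, show a gap of size at least $2$ between consecutive intervals once $k\geq 3$, and conclude that no oasis can straddle two of them. The paper imports the intervals (there called $[m_{b,n},M_{b,n}]$) and the gap inequality $M_{b,n}+1<m_{b,n+1}$ from its Part~I and then reduces to two \emph{adjacent} constants $\bar c,\hat c$ in the oasis whose fixed points have different lengths; you instead derive your $L(k),U(k)$ from scratch and exhibit a ``gap point'' $c^{*}$ with no fixed point at all. These are interchangeable packagings of the same idea, and your explicit gap computation $L(k+1)-U(k)=3(b-1)+(b-1)^{2}(k-2)-\lfloor b^{2}/4\rfloor$ is correct.

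One small omission to patch: your $L(k),U(k)$ are only defined for $k\geq 3$, so when you invoke ``$c'\leq U(m+1)$'' to rule out $m<n$, and later assert ``$c^{*}$ lies in no $D(k)$,'' the cases $k\in\{1,2\}$ are not yet covered. This is immediate once noted---$D(1)\cup D(2)\subseteq[0,\lfloor b^{2}/4\rfloor]$, while $c^{*}\geq U(3)+1=\lfloor b^{2}/4\rfloor+(b-1)(b^{2}-b+1)+1>\lfloor b^{2}/4\rfloor$---but it should be stated explicitly. The paper treats the analogous boundary case ($\hat n<2$) as a separate paragraph.
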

 
We note that Theorem~\ref{t:samedigit} is optimal in that fixed points of two and three digits, respectively, can have augmenting constants in the same oasis.  For example, in base 16, the two-digit number $a = 85_{(16)}$ has augmenting constant $c = 44$ and the three-digit number $\hat{a} = 10(15)_{(16)} = 
1\cdot 16^2 + 15$ has augmenting constant $\hat{c} = 45$. Clearly $c$ and $\hat{c}$ are in the same oasis base 16.

\begin{proof}[Proof of Theorem~\ref{t:samedigit}]
Consider the collection of all values of $c$ in a fixed oasis base $b$, and the set of all fixed points of the happy functions base $b$ with those $c$s as augmenting constants.  Assume that two of these fixed points have different numbers of digits and at least one of them has more than 3 digits.
Then there must exist augmenting constants $\bar c$ and $\hat c$ in the oasis with $|\bar c - \hat c| \leq 1$, and fixed points $\bar a$ of $S_{[\bar c,b]}$ with $\bar n + 1 > 3$ digits and $\hat a$ of $S_{[\hat c,b]}$ with $\hat n + 1 \neq \bar n + 1$ digits.  We may assume without loss of generality that $\hat n < \bar n$.

In Part I of this paper~\cite[Theorem 4.2]{desert}, Baker Swart et al.\ showed that for $n \geq 2$, if $\Scb$ has a fixed point of $n + 1$ digits, then $m_{b,n} \leq c \leq M_{b,n}$, where the bounds, which are given explicitly in terms of their parameters, are sharp. They also showed~\cite[Lemma 4.3]{desert} that, for $n \geq 2$, $M_{b,n} + 1 < m_{b,n+1}$.

In the same work~\cite[Theorem 4.2]{desert}, the authors prove that, for $n \geq 2$, if $\Scb$ has a fixed point of $n + 1$ digits, then $m_{b,n} \leq c \leq M_{b,n}$, where the bounds, which are given explicitly in terms of their parameters, are sharp.  Additionally, they show~\cite[Lemma 4.3]{desert} that, for $n \geq 2$, $M_{b,n} + 1 < m_{b,n+1}$. It follows (using a simple induction argument) that if $\hat n \geq 2$, then $\hat c + 1\leq M_{b,\hat n} + 1 < m_{b,\bar n} \leq \bar c$. But this implies that $1 < \bar c - \hat c = |\bar c - \hat c|$, a contradiction.  

Thus, we have that $\hat n < 2$. Letting 
$\hat a = \sum_{i=0}^{\hat n}a_ib^i$, Equation~(\ref{eqn:cfixedpoint}) yields
\[\hat c = (b - a_1)a_1 + (1-a_0)a_0.\]
The largest possible value of $(b-a_1)a_1$ occurs when 
$a_1 = \lfloor b/2\rfloor$ and the  largest possible value of $(1-a_0)a_0$ is 0.  Thus,
\[\hat c = (b - a_1)a_1 + (1-a_0)a_0 \leq (b - \lfloor b/2\rfloor) \lfloor b/2\rfloor + 0 \leq b^2/4.\] 
Since $\bar a$ has more than three digits, $\bar n \geq 3$, and so~\cite[Theorem 4.2]{desert} implies that
\[\bar{c} \geq m_{b,\hat{n}} = b^{\hat{n}} - b^2 + 3b  - 3 \geq b^3 - b^2 + 3b - 3.\]
Combining these and the fact that $b \geq 2$ yields
\[1 \geq |\bar c - \hat c| \geq (b^3 - b^2 + 3b - 3) - b^2/4 > 1,\] a contradiction, completing the proof.
\end{proof}

\begin{theorem}\label{T:matchingdigits} Fix $b \geq 2$ and let $c$ and $\hat c$ be in the same oasis base $b$.  Let $\Scb(a) = a$ and $S_{[\hat{c},b]}(\hat{a})=\hat{a}$.  Then, letting $a_i$ and $\hat{a}_i$ denote the coefficients of $b^i$ in the base $b$ expansions of $a$ and $\hat{a}$, respectively, for each $i \geq 3$, $a_i = \hat{a}_i$. \end{theorem}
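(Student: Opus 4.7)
The strategy is to exploit the fact that an oasis is a block of consecutive integers, each an augmenting constant with at least one fixed point, and to combine this with sharp control on how Equation~(\ref{eqn:cfixedpoint}) behaves when base-$b$ digits change. First, if both $a$ and $\hat a$ have at most three base-$b$ digits, then $a_i = 0 = \hat a_i$ for every $i \geq 3$ and the claim is immediate. Otherwise, at least one of $a, \hat a$ has more than three digits, and Theorem~\ref{t:samedigit} forces every fixed point whose augmenting constant lies in the oasis---in particular both $a$ and $\hat a$---to have exactly $n+1$ digits for some common $n \geq 3$.

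Assume without loss of generality that $c \leq \hat c$. For each $c'$ with $c \leq c' \leq \hat c$, choose a fixed point $a^{(c')}$ of $S_{[c',b]}$, with $a^{(c)} = a$ and $a^{(\hat c)} = \hat a$; each $a^{(c')}$ has exactly $n+1$ digits. Split Equation~(\ref{eqn:cfixedpoint}) into high-digit ($i \geq 3$) and low-digit ($i \in \{0,1,2\}$) contributions, so that $c' = H^{(c')} + L^{(c')}$ with
\[ H^{(c')} = \sum_{i=3}^{n}(b^i - a^{(c')}_i)\,a^{(c')}_i \quad \text{and} \quad L^{(c')} = \sum_{i=0}^{2}(b^i - a^{(c')}_i)\,a^{(c')}_i. \]
A direct maximization/minimization over $a_0, a_1, a_2 \in \{0, 1, \ldots, b-1\}$ shows that $L^{(c')}$ always lies in an interval $[L_{\min}, L_{\max}]$ whose width satisfies $L_{\max} - L_{\min} \leq b^3 - 2b$, with equality at $b = 2$. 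Next, writing $H(\vec h) = \sum_{i \geq 3}(b^i - h_i)\,h_i$ and factoring
\[ H(\vec h') - H(\vec h) = \sum_{i \geq 3}(h_i' - h_i)\bigl(b^i - h_i' - h_i\bigr), \]
I isolate the contribution at the largest index $j^* \geq 3$ where $\vec h$ and $\vec h'$ differ (magnitude at least $b^{j^*} - 2b + 2$) and bound the remaining mass using the geometric sum $\sum_{i=3}^{j^*-1}(b-1)b^i = b^{j^*} - b^3$; this yields $|H(\vec h') - H(\vec h)| \geq b^3 - 2b + 2$ whenever $\vec h \neq \vec h'$.

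These two estimates close the argument by a one-step telescoping. Suppose, toward a contradiction, that for some $c'$ with $c \leq c' < \hat c$ the high-order digit vectors of $a^{(c')}$ and $a^{(c'+1)}$ differ. The identity $(H^{(c'+1)} - H^{(c')}) + (L^{(c'+1)} - L^{(c')}) = 1$, combined with the gap $|H^{(c'+1)} - H^{(c')}| \geq b^3 - 2b + 2$, would force $|L^{(c'+1)} - L^{(c')}| \geq b^3 - 2b + 1$, contradicting $|L^{(c'+1)} - L^{(c')}| \leq L_{\max} - L_{\min} \leq b^3 - 2b$. Hence the high-order digit vector is constant on $\{c, c+1, \ldots, \hat c\}$, so in particular $a_i = \hat a_i$ for every $i \geq 3$. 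The principal technical obstacle is verifying the two sharp numerical bounds; both reduce to the arithmetic check $\lfloor b^2/4 \rfloor \leq b^2 - b - 1$ (valid for $b \geq 2$, tight at $b=2$), after which the rest is routine bookkeeping.
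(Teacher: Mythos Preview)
Your proof is correct and follows essentially the same strategy as the paper's: reduce to augmenting constants that differ by~$1$, invoke Theorem~\ref{t:samedigit} to equalize the digit counts, then show that a change in any digit at position $j \geq 3$ forces the augmenting constants to differ by more than~$1$.

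The differences are organizational rather than substantive. You spell out the reduction to consecutive $c'$ and $c'+1$ explicitly via a chain of intermediate fixed points, whereas the paper compresses this into a ``without loss of generality $|\hat c - c| \leq 1$'' step; your version is the more careful of the two. You also package the estimate differently, splitting $c' = H^{(c')} + L^{(c')}$ and bounding the high-digit jump from below ($\geq b^3 - 2b + 2$) and the low-digit range from above ($\leq b^3 - 2b$), while the paper bounds $|\hat c - c|$ directly via a first-derivative monotonicity argument, arriving at $|\hat c - c| \geq \tfrac{3}{4}b^2 - b + 2$. Both routes yield the needed contradiction. One small expository slip: only the $L$-bound, not both bounds, reduces to the inequality $\lfloor b^2/4 \rfloor \leq b^2 - b - 1$; the $H$-bound follows from the geometric-sum computation you already gave. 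This does not affect correctness.
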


\begin{proof}
Suppose for a contradiction that there exists an $i\geq 3$ such that $a_i \neq \hat{a}_i$. 
Then, at least one of $a$ and $\hat{a}$ has more than three digits, and so, by Theorem~\ref{t:samedigit}, $a$ and $\hat{a}$ have the same number of digits, say $n + 1 > 3$.  We may assume, without loss of generality, that $|\hat{c} - c| \leq 1$.

Fix $j\geq 3$ maximal such that $a_j \neq \hat{a}_j$.
 Then using Equation~(\ref{eqn:cfixedpoint}), we have
\begin{align*}
|\hat{c} - c| 
& = \left|\left(\sum_{i=0}^n(b^i-\hat{a}_i)\hat{a}_i\right) - \left(\sum_{i=0}^n(b^i-a_i)a_i\right)\right|\\
& = \left| \sum_{i=0}^j\left((b^i-\hat{a}_i)\hat{a}_i - (b^i-a_i)a_i\right)\right|\\
& \geq \left| (b^j-\hat{a}_j)\hat{a}_j - (b^j - a_j)a_j \right| -
\sum_{i=0}^{j-1}\left|(b^i - \hat{a}_i)\hat{a}_i - (b^i-a_i)a_i\right|.
\end{align*}

For $i\geq 2$ and $0 \leq x \leq b-1$, the first derivative of the function $f(x) = (b^i - x)x$ is positive and decreasing.  Thus, the function is increasing at a decreasing rate over the domain.  Therefore, the smallest difference between the function values for two integer values of $x$ occurs when $x = b - 1$ and $x = b - 2$.  Similarly, the largest difference occurs when $x = b - 1$ and $x = 0$.  It follows that
\begin{align*}
|\hat{c} - c| 
\geq{}&\left((b^j - (b - 1))(b - 1) - (b^j - (b - 2))(b - 2)\right) \\
& - \sum_{i=2}^{j-1}\left((b^i - (b - 1))(b - 1) - (b^i - 0)(0)\right) \\
& - \sum_{i=0}^{1}\left|(b^i - \hat{a}_i)\hat{a}_i - (b^i-a_i)a_i\right|\\
\geq{}& b^j - (b-1)^2 + (b-2)^2 - \sum^{j-1}_{i=2} b^i(b-1) + (j - 2)(b - 1)^2 \\
& - \left|\left(b-\frac{b}{2}\right)\left(\frac{b}{2}\right) - (b - 0)(0)\right| \\
& -|(1 - (b - 1))(b - 1) - (1 - 0)(0)|\\
={}& b^2 + (b - 2)^2 + (j - 3)(b - 1)^2 - \frac{b^2}{4} - (b - 1)(b - 2).
\end{align*}
Since $j\geq 3$ and $b \geq 2$, this implies that
\[1 \geq |\hat{c} - c| \geq \frac{3}{4}b^2 - b + 2 > 2,\] a contradiction.
Thus no such $i \geq 3$ exists, as desired.
\end{proof}

\begin{theorem}\label{t:maxoasis}
Let $b \geq 2$.  If there exists a $k$-oasis base $b$, then 
\begin{equation*}
k \leq \frac{b^3}{2} +\frac{b^2}{2} - b.
\end{equation*}
\end{theorem}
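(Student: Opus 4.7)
The plan is to use Theorem~\ref{T:matchingdigits} to reduce the problem to counting how many distinct values the augmenting constant can take when only the bottom three base-$b$ digits of a fixed point are permitted to vary. Specifically, suppose $\{c+1, c+2, \ldots, c+k\}$ is a $k$-oasis base $b$, and for each $j$ with $1 \leq j \leq k$ choose a fixed point $a^{(j)}$ of $S_{[c+j,b]}$. By Theorem~\ref{T:matchingdigits}, the base-$b$ digits $a_i^{(j)}$ with $i \geq 3$ are independent of $j$, so Equation~(\ref{eqn:cfixedpoint}) yields
\[
c + j = K + G(a_0^{(j)}) + H(a_1^{(j)}) + P(a_2^{(j)}),
\]
where $K = \sum_{i \geq 3}(b^i - a_i)a_i$ is independent of $j$ and the functions
\[
G(x) = (1-x)x, \qquad H(x) = (b-x)x, \qquad P(x) = (b^2-x)x
\]
are defined on $x \in \{0, 1, \ldots, b-1\}$. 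Hence $k$ is at most the number of distinct values attained by $G(a_0) + H(a_1) + P(a_2)$, which is at most the product of the numbers of distinct values attained by $G$, $H$, and $P$ individually.

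I would then carry out these three elementary counts. Since $G(0) = G(1) = 0$ and $G$ is strictly decreasing on $\{1, 2, \ldots, b-1\}$, $G$ attains $b - 1$ distinct values. Since $H(a_1) = H(b - a_1)$ and $H$ is strictly increasing on $\{0, 1, \ldots, \lfloor b/2 \rfloor\}$, a short parity check (accounting for the self-pairing $a_1 = b/2$ when $b$ is even) shows that $H$ attains exactly $\lfloor b/2 \rfloor + 1$ distinct values. Since the increments $P(x+1) - P(x) = b^2 - 2x - 1$ are positive for $x \leq b-2$, the function $P$ is strictly increasing on $\{0, 1, \ldots, b-1\}$ and so attains $b$ distinct values. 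Multiplying these counts and using $\lfloor b/2 \rfloor + 1 \leq (b+2)/2$, I obtain
\[
k \leq (b-1)\bigl(\lfloor b/2 \rfloor + 1\bigr)\,b \leq \frac{b(b-1)(b+2)}{2} = \frac{b^3}{2} + \frac{b^2}{2} - b,
\]
the desired bound.

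No step poses a serious obstacle; once Theorem~\ref{T:matchingdigits} is in hand, the proof reduces to three small counting exercises together with one polynomial simplification. The only mild subtlety is the parity split in counting distinct values of $H$, where the self-pairing $a_1 = b/2$ for even $b$ must be handled separately from the pairings $a_1 \leftrightarrow b - a_1$ with $a_1 \neq b - a_1$.
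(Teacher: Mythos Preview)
Your proof is correct and follows essentially the same approach as the paper: invoke Theorem~\ref{T:matchingdigits} to freeze the digits in positions $\geq 3$, then bound $k$ by the number of distinct augmenting constants obtainable from the bottom three digits, using the coincidences $G(0)=G(1)$ and $H(d)=H(b-d)$ to cut the count from $b^3$ down to $(b-1)\bigl(\lfloor b/2\rfloor+1\bigr)b$. The only cosmetic differences are that the paper treats odd $b$ separately via Theorem~\ref{oases} and phrases the two reductions as substitutions of fixed points (citing results from~\cite{desert}), whereas you handle both parities at once and read the same reductions directly off the value sets of $G$, $H$, and $P$.
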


\begin{proof}
If $b$ is odd, then, by Theorem~\ref{oases}, every oasis base $b$ has length 1, which is less than $b^3/2 + b^2/2 - b$.  So we assume that $b$ is even.

Let $\{c + j | 1 \leq j \leq k\}$ be a $k$-oasis base $b$.  For each $j$, $1 \leq j \leq k$, let $a(j)$ be a fixed point of $S_{[c+j,b]}$. By Theorem~\ref{T:matchingdigits}, the fixed points, $a(j)$, differ in, at most, the rightmost three digits.  Since each fixed point corresponds to exactly one augmenting constant, this implies an initial bound: $k \leq b^3$.  But we can improve on this.

Baker Swart et al.~\cite[Theorem 2.1]{desert} prove that if $a(j)$ is a multiple of $b$, then $a(j) + 1$ is also a fixed point of $S_{[c+j,b]}$.  Thus, substituting $a(j) + 1$ for $a(j)$, if necessary, we may assume that none of the $a(j)$ is a multiple of $b$.  This leaves us with $b-1$ possible rightmost digits.

Similarly, if $a(j)$ has second rightmost digit equal to $d \neq 0$, then the number obtained by replacing that digit with the digit $b-d$ is another fixed point of $S_{[c+j,b]}$~\cite[Lemma 2.2]{desert}.  Thus we may assume that none of the $a_i$ have second rightmost digit greater than $b/2$.  This leaves us with $(b+2)/2$ possible second rightmost digits.  

So, the number of possible values of the rightmost three digits of the $a(j)$s is 
\[(b)\left(\frac{b+2}{2}\right)(b - 1).\]
Since, for each of the $k$ augmenting constants, $c + j$, there is a distinct fixed point, $a(j)$, we conclude that the number of augmenting constants in the oasis is
\begin{equation*}
k \leq (b)\left(\frac{b+2}{2}\right)(b - 1) = \frac{b^3}{2}+\frac{b^2}{2}-b,
\end{equation*}
as desired.
\end{proof}

\section{Maximal lengths of oases base \texorpdfstring{$b$}{b}}
\label{S:MirageB}

In the previous section, we determined a general formula for an upper bound for the length of an oasis base $b$, for $b\geq 2$.  In this section, we present an algorithm for determining a new bound on this length, which, in many cases, can be shown to provide the precise maximal length of oases base $b$.

 From Theorem~\ref{oases}, we know that if $b$ is odd, every oasis base $b$ has length 1. For bounding the oasis lengths for even bases, we introduce the concept of a mirage base $b$.  
For convenience, we extend the domain of the function $S_{[0,b]}$ for $b \geq 2$ to include 0 by defining $S_{[0,b]}(0) = 0$.  

\begin{definition} For $b\geq 2$ and $k \in \ZZ^+$, a {\em $k$-mirage base $b$} is a set of $k$ consecutive integers $\{d_1,..., d_k\}$, such that for each $1\leq i \leq k$, $d_i = r_i - S_{[0,b]}(r_i)$ with $r_i$ a non-negative integer of at most three digits.  A {\em mirage base $b$} is a $k$-mirage base $b$ for some $k \geq 1$. \end{definition}  

A mirage base $b$ may or may not be an oasis base $b$.  
We first provide a large class of mirages that actually are oases.

\begin{lemma}\label{l:mirageisoasis}
If a $k$-mirage base $b$ contains only positive integers, then it is a $k$-oasis base $b$. 
\end{lemma}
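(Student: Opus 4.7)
The plan is to show that, when each $d_i$ is positive, the corresponding $r_i$ furnished by the mirage definition is actually a fixed point of $S_{[d_i,b]}$, so that $d_i$ is an admissible augmenting constant for the oasis definition.

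First, I would unpack the hypothesis. By the definition of a $k$-mirage base $b$, for each $1 \leq i \leq k$ there is a non-negative integer $r_i$ of at most three digits with $d_i = r_i - S_{[0,b]}(r_i)$. To invoke $S_{[d_i,b]}$, whose domain is $\ZZ^+$, I need $r_i > 0$. This comes for free from the positivity hypothesis: by the extended convention $S_{[0,b]}(0) = 0$, the choice $r_i = 0$ would force $d_i = 0$, contradicting $d_i > 0$. Hence $r_i \in \ZZ^+$.

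The key computation is then a one-line verification. From the definition of $\Scb$, for any $c \geq 0$ and any positive integer $r$,
\begin{equation*}
S_{[c,b]}(r) = c + S_{[0,b]}(r).
\end{equation*}
Applying this with $c = d_i$ and $r = r_i$,
\begin{equation*}
S_{[d_i,b]}(r_i) = d_i + S_{[0,b]}(r_i) = \bigl(r_i - S_{[0,b]}(r_i)\bigr) + S_{[0,b]}(r_i) = r_i,
\end{equation*}
so $r_i$ is a fixed point of $S_{[d_i,b]}$. (Equivalently, writing $r_i = \alpha_2 b^2 + \alpha_1 b + \alpha_0$, one recognizes $d_i = \sum_{j=0}^{2}(b^j - \alpha_j)\alpha_j$, which matches Equation~(\ref{eqn:cfixedpoint}) in the case $n = 2$.)

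Finally, I would conclude by combining the three properties guaranteed of $\{d_1,\dots,d_k\}$: the $d_i$ are $k$ consecutive integers by the mirage hypothesis; they are non-negative (indeed positive) by the extra hypothesis of the lemma; and each $d_i$ is an augmenting constant for which $S_{[d_i,b]}$ possesses at least one fixed point, namely the associated $r_i$. By the definition of a $k$-oasis base $b$, the set $\{d_1,\dots,d_k\}$ is therefore a $k$-oasis base $b$, as desired. There is no substantive obstacle here: the lemma is essentially a tautology once one observes that the mirage formula $d = r - S_{[0,b]}(r)$ is precisely the fixed-point equation for $S_{[d,b]}$, and the positivity assumption is exactly what is needed to place $r$ in the domain of the happy function.
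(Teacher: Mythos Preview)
Your proof is correct and follows essentially the same approach as the paper: verify directly that $S_{[d_i,b]}(r_i) = d_i + S_{[0,b]}(r_i) = r_i$, so each $r_i$ is a fixed point of $S_{[d_i,b]}$. Your version is slightly more careful in that you explicitly check $r_i > 0$ (needed since $S_{[d_i,b]}$ has domain $\ZZ^+$), a point the paper glosses over.
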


\begin{proof}
Given a $k$-mirage containing only positive integers, using the notation in the definition of $k$-mirage base $b$, for each $1\leq i \leq k$, we have $S_{[d_i,b]}(r_i) = d_i + S_{[0,b]}(r_i) = r_i$.  Thus $r_i$ is a fixed point of $S_{[d_i,b]}$, and so $\{d_1,..., d_k\}$ is a $k$-oasis base $b$.
\end{proof}

Of course, not all mirages base $b$ are oases base $b$.  For example, 
$-4 = 16 - S_{[0,6]}(16)$, $-3 = 22 - S_{[0,6]}(22)$, $-2 = 2 - S_{[0,6]}(2)$, $-1 = 9 - S_{[0,6]}(9)$, and $0 = 1 - S_{[0,6]}(1)$, implying that $\{-4,-3,-2,-1,0\}$ is a 5-mirage base 6, though not an oasis base 6.

We next show that given a $k$-oasis base $b$, there must exist a $k$-mirage base $b$.

\begin{theorem}\label{t:mirage}
Given $b\geq 2$ and $k\in \ZZ^+$, if there exists a $k$-oasis base $b$, then there exists a $k$-mirage base $b$.
\end{theorem}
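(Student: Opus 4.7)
My approach is to start from a $k$-oasis $\{c+j : 1\leq j \leq k\}$ base $b$ with corresponding fixed points $a(j)$ of $S_{[c+j,b]}$, and to explicitly produce a $k$-mirage as the translate $\{c+j - D : 1\leq j \leq k\}$, where $D$ is an integer that strips off the high-order digits shared by all of the $a(j)$.

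First, I would rewrite Equation~(\ref{eqn:cfixedpoint}) in a more suggestive form. Writing $a(j) = \sum_{i=0}^n a_i(j) b^i$, a direct manipulation gives
\[c+j = \sum_{i=0}^n (b^i - a_i(j)) a_i(j) = a(j) - S_{[0,b]}(a(j)),\]
which already resembles the mirage condition; the only obstruction is that $a(j)$ may have more than three digits.

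To handle this, I would invoke Theorems~\ref{t:samedigit} and~\ref{T:matchingdigits}. If every $a(j)$ has at most three digits, set $A = 0$ and $r_j = a(j)$. Otherwise some $a(j)$ has more than three digits, in which case Theorem~\ref{t:samedigit} forces every $a(j)$ to have the same number of digits $n+1 > 3$, and Theorem~\ref{T:matchingdigits} forces the digits $a_i(j)$ for $i \geq 3$ to be independent of $j$; then take $A = \sum_{i\geq 3} a_i b^i$ and $r_j = a(j) - A = \sum_{i=0}^{2} a_i(j)\, b^i$, which is a non-negative integer less than $b^3$. In either case the digit positions of $A$ and of $r_j$ are disjoint, so $S_{[0,b]}(a(j)) = S_{[0,b]}(A) + S_{[0,b]}(r_j)$, using the extension $S_{[0,b]}(0) = 0$ for any $j$ with $r_j = 0$. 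Setting $D = A - S_{[0,b]}(A)$, which depends only on the common high digits and is therefore the same for every $j$, the identity above becomes $c+j - D = r_j - S_{[0,b]}(r_j)$. The set $\{c+j-D : 1 \leq j \leq k\}$ thus consists of $k$ consecutive integers, each of the required form, so it is a $k$-mirage base $b$.

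I expect the only delicate point is confirming that the shift $D$ is genuinely independent of $j$, which is precisely what Theorem~\ref{T:matchingdigits} supplies; the rest is routine bookkeeping with the identity $c+j = a(j) - S_{[0,b]}(a(j))$ and the observation that disjoint digit positions make $S_{[0,b]}$ additive.
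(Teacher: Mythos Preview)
Your proof is correct and follows essentially the same approach as the paper's: split into the cases where all fixed points have at most three digits versus some has more than three, invoke Theorems~\ref{t:samedigit} and~\ref{T:matchingdigits} to identify the common high-order part, and then translate the oasis by the constant determined by that common part to obtain a mirage. Your presentation is arguably slightly cleaner in that you name the translation $D = A - S_{[0,b]}(A)$ explicitly and appeal to the additivity of $S_{[0,b]}$ over disjoint digit positions, whereas the paper carries out the equivalent computation via explicit digit sums, but the underlying argument is the same.
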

 
\begin{proof}
Let $\mathcal O = \{c + j| 1\leq j\leq k\}$ be a $k$-oasis base $b$, and for each $1\leq j \leq k$, let $a(j) \in \ZZ^+$ be a fixed point of $S_{[c +j,b]}$.  

First, consider the case in which each $a(j)$ has 3 or fewer digits.  Then for $1\leq j \leq k$, $a(j) = S_{[c+j,b]}(a(j)) = (c + j) + S_{[0,b]}(a(j))$, and so $c + j = a(j) - S_{[0,b]}(a(j))$.  Thus $\mathcal O$ is a $k$-mirage base $b$, and we are done.

Next, consider the case in which, for at least one value of $j$, $a(j)$ has more than 3 digits. Then, by Theorem~\ref{t:samedigit}, all of the $a(j)$ have the same number of digits, say $n + 1 > 3$.  For each $1\leq j\leq k$, let $0\leq a(j)_i \leq b-1$, such that 
\[a(j) = \sum_{i=0}^n a(j)_ib^i,\]  
and define \[r_j = \sum_{i=0}^2 a(j)_ib^i.\] 
By Theorem~\ref{T:matchingdigits}, for each $1\leq j\leq k$ and $i \geq 3$, $a(j)_i = a(1)_i$.  Thus, for each $1\leq j\leq k$, 
\begin{equation} \label{eq:C}
a(j) = r_j + \sum_{i=3}^n a(j)_ib^i = r_j + \sum_{j=3}^n a(1)_ib^i = r_j + a(1) - r_1.
\end{equation}
Further, since $a(j)$ is a fixed point of $S_{[c+j,b]}$, we have that
\begin{equation}\label{eq:C-hat}
\begin{split}
a(j) &= S_{[c+j,b]}(a(j)) = (c + j) + S_{[0,b]}(r_j) + \sum_{i=3}^n a(j)_i^2   \\
&= (c + j) + S_{[0,b]}(r_j) + \sum_{i=3}^n a(1)_i^2. 
\end{split}
\end{equation} 
Thus, using equations~(\ref{eq:C}) and~(\ref{eq:C-hat}), for each $1\leq j \leq k$,
\begin{equation} \label{eq:consec}
r_j - S_{[0,b]}(r_j) = \left(r_1 - a(1) + \sum_{i=3}^n a(1)_i^2 + c\right) + j.
\end{equation} 
Since the only value on the right-hand-side of Equation~(\ref{eq:consec}) dependent on $j$ is $j$ itself,  
\[\left\{r_j - S_{[0,b]}(r_j) | 1\leq j \leq k\right\}\]
is a set of consecutive integers and thus is a $k$-mirage base $b$.
\end{proof}

\begin{table}[b!]
\begin{center}
{\small
\begin{tabular}{|c|c|c|}\hline 
Base&Length&Minimal maximal length oasis  \\
\hline
&&Smallest fixed points\\
\hline\hline
2& 2 & $\{3,4\}$\\
\hline
& &  ${4,6}$\\
\hline\hline
4& 6 
& $\{28,29,30,31,32,33\}$ \\
\hline
&
& ${32, 38, 42, 36, 40, 51}$\\
\hline\hline
6& 5  
 & $\{5,6,7,8,9 \}$ \\
\hline
&
 &  ${6, 14, 20, 12, 18}$\\
\hline\hline
8 & 8 
 & $\{304,305,306,307,308,309,310,311\}$  \\
\hline
&
 &  ${347, 338, 391, 336, 346, 354, 344, 352}$\\
\hline\hline
10 & 8 
 & $\{487, 488, 489, 490, 491, 492, 493, 494\}$ \\
\hline
&
 &  ${544, 554, 522, 533, 520, 609, 543, 532 }$\\
\hline\hline
12 & 8 & \{172, 173, 174, 175, 176, 177, 178, 179\}  \\
\hline
& & {207, 194, 299, 192, 206, 218, 204, 216}  \\
\hline\hline
14 & 8
 & $\{421, 422, 423, 424, 425, 426, 427, 428\}$ \\
\hline
&
 &  ${434, 451, 601, 480, 494, 450, 465, 448}$\\
\hline\hline
16 & 8
 & $\{559, 560, 561, 562, 563, 564, 565, 566\}$ \\
\hline
&
 &  ${628, 644, 594, 611, 592, 799, 627, 610}$\\
\hline\hline
18 & 8
 & $\{1663, 1664, 1665, 1666, 1667, 1668, 1669, 1670\}$ \\
\hline
&
 &  ${1768, 1786, 1730, 1749, 1728, 1960, 1767, 1748}$\\
\hline\hline
20 & 9
 & $\{5124, 5125, 5126, 5127, 5128, 5129, 5130, 5131, 5132\}$ \\
\hline
&
 &  ${5383, 5362, 5699, 5360, 5382, 5402, 5380, 5400, 5617}$\\
 \hline
\end{tabular}
\caption{Minimal valued maximal length oases and smallest fixed points for small even bases. (Results are 
all given in base 10.) \label{max}}
}
\end{center}
\end{table}

It follows from Theorem~\ref{t:mirage} that the length of the longest mirage base $b$ bounds the length of the longest oasis base $b$. Since each element in a mirage is generated by a fixed point between $0$ and $b^3$ exclusive, the maximum length of a mirage base $b$ can be determined by a direct computer search.  Formalizing this algorithm: in order to determine, for some fixed $b \geq 2$, the maximal length of a mirage base $b$, and to check whether this is necessarily equal to the maximal length of an oasis base $b$, the following steps suffice.
\begin{enumerate}
\item
For each $0 < r < b^3$, compute $d = r - S_{[0,b]}(r)\in \ZZ$.
\item
Sort the values of $d$.
\item
Determine the length of the longest string of consecutive values of $d$.
\item
Check whether there is a longest string in which all of the values of $d$ are positive.
\end{enumerate}
The result of step 3 is the maximal length of a
mirage base $b$ and, therefore, a bound on the length of the maximal length oasis base $b$.  Each longest string found in step 3 is an example of a maximal length mirage base $b$.  If step 4 is answered in the affirmative, then this string of positive values of $d$ is also an example of a maximal length oasis base $b$.

We carry out this algorithm for all even bases $2 \leq b \leq 20$, in each case finding a maximal oasis base $b$.  We summarize the results in the following theorem.

\begin{theorem}
The maximal lengths of oases base $b$ for bases 2, 4, and 6, are 2, 6, and 5, respectively; the maximal length of oases base $b$ for bases 8, 10, 12, 14, 16, and 18 is 8; and the maximal length of oases base 20 is 9.
\end{theorem}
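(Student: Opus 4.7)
The plan is to apply, for each even base $b \in \{2,4,6,8,10,12,14,16,18,20\}$, the four-step computational procedure described just before the theorem, then match the resulting upper bound against an explicit oasis of equal length exhibited in Table~\ref{max}. For the upper bound, I would for each such $b$ enumerate all $r$ with $0 < r < b^3$, compute the value $d = r - S_{[0,b]}(r)$, sort these $b^3 - 1$ integer values, and scan for the longest run of consecutive integers. By Theorem~\ref{t:mirage}, the length of any oasis base $b$ is bounded above by the length of the longest mirage base $b$, which is exactly what step 3 of the algorithm returns. Running this would yield maximal mirage lengths of $2, 6, 5, 8, 8, 8, 8, 8, 8, 9$ for $b = 2,4,6,8,10,12,14,16,18,20$, respectively.

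For the matching lower bound, I would verify that the sets listed in the second column of Table~\ref{max} are genuine oases base $b$ of the claimed length. This amounts to checking, for each row, that each listed candidate fixed point $a(j)$ in base $b$ satisfies $\Scb(a(j)) = a(j)$ for the augmenting constant $c$ equal to its position in the consecutive block; equivalently, using Equation~(\ref{eqn:cfixedpoint}), that $c = \sum_i (b^i - a_i)a_i$ evaluates to the asserted value. Each such verification is a direct finite computation in the given base. Once done, Lemma~\ref{l:mirageisoasis} is not even required here; the existence of these explicit oases witnesses the lower bound.

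The two bounds coincide for every $b$ in the range, establishing the stated maximal lengths. To round out the argument, one may note that for $b=2$, $b=4$, and $b=6$, the values $2$, $6$, and $5$ appear, but the general $5$-oasis construction in part 3 of Theorem~\ref{oases} requires $b \geq 6$, so for $b = 6$ this construction is already achieving maximality; for $b = 2, 4$ the small bases are handled directly by the same algorithm. The remaining bases $b \in \{8,10,12,14,16,18\}$ all share the maximal length $8$, while $b=20$ jumps to $9$, consistent with but well below the general bound $b^3/2 + b^2/2 - b$ from Theorem~\ref{t:maxoasis}.

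The principal obstacle is purely computational rather than conceptual: one must actually execute the search over up to $b^3 = 8000$ values for $b = 20$, then, for the resulting longest run, confirm via step 4 that it consists of positive $d$-values (so that Lemma~\ref{l:mirageisoasis} guarantees it is an oasis, not merely a mirage) and that the witnessing fixed points shown in Table~\ref{max} are correct. Since all numerics are bounded explicitly and the verification per row is short, this is routine to carry out and certify.
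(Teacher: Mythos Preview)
Your proposal is correct and follows essentially the same approach as the paper: run the four-step mirage algorithm to obtain the upper bound via Theorem~\ref{t:mirage}, and confirm that step~4 yields a maximal-length mirage of positive values (hence an oasis by Lemma~\ref{l:mirageisoasis}), with Table~\ref{max} recording the explicit witnesses. The paper's own proof is just the terse statement that the algorithm was executed for each even $b\le 20$; your write-up simply spells out the mechanics in more detail.
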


In Table~\ref{max} we provide, for each even base, $2\leq b \leq 20$, the minimal example of an oasis of maximal length, along with the smallest fixed point of the augmented happy function determined by each augmenting constant in the oasis.

\bigskip
\hrule
\bigskip

\noindent \textit{AMS 2010 Mathematics Subject Classification:} Primary 11A63.

\noindent \emph{Keywords:} happy number, fixed point, iteration.

\bigskip
\hrule
\bigskip
\end{document}